\theoremstyle{plain}
\newtheorem{theorem}{Theorem}[section]
\newtheorem*{theorem*}{Theorem} % unnumbered theorem
\newtheorem{proposition}[theorem]{Proposition}
\newtheorem{condition}[theorem]{Condition}
\theoremstyle{definition}
\theoremstyle{remark}
\newcommand{\rTheta}{{\tilde{\Theta}}}
\newcommand{\V}{\mathcal{V}}
\newcommand{\R}{\mathbb{R}}
\renewcommand{\Pr}{\mathbb{P}}
\newcommand{\X}{\mathcal{X}}
\newcommand{\branch}[4]{
\left\{
	\begin{array}{ll}
		#1  & \mbox{if } #2 \\
		#3 & \mbox{if } #4
	\end{array}
\right.
}
\def\app#1#2{%
  \mathrel{%
    \setbox0=\hbox{$#1\sim$}%
    \setbox2=\hbox{%
      \rlap{\hbox{$#1\propto$}}%
      \lower1.3\ht0\box0%
    }%
    \raise0.25\ht2\box2%
  }%
}
\title{Consistency of mixture models with a prior on the number of components}
\author{Jeffrey W. Miller\\
Harvard University, Department of Biostatistics}
\begin{document}
\maketitle
\begin{abstract}
This article establishes general conditions for posterior consistency of Bayesian finite mixture models with a prior on the number of components.
That is, we provide sufficient conditions under which the posterior concentrates on neighborhoods of the true parameter values when the data are generated from a finite mixture over the assumed family of component distributions.
Specifically, we establish almost sure consistency for the number of components, the mixture weights, and the component parameters, up to a permutation of the component labels.
The approach taken here is based on Doob's theorem, which has the advantage of holding under extraordinarily general conditions,
and the disadvantage of only guaranteeing consistency at a set of parameter values that has probability one under the prior.
However, we show that in fact, for commonly used choices of prior, this yields consistency at Lebesgue-almost all parameter values ---
which is satisfactory for most practical purposes.
We aim to formulate the results in a way that maximizes clarity, generality, and ease of use.
\end{abstract}

\noindent{\small Key words and phrases: asymptotics; Bayesian statistics; clustering; nonparametric inference.}\\
\noindent{\small AMS 2000 MSC: Primary: 62G20; Secondary: 62F15.}

\section{Introduction}

Many theoretical advances have been made in establishing posterior consistency and contraction rates for density estimation when using
nonparametric mixture models (see \citealp{ghosal2017fundamentals} and the many references therein)
or finite mixture models with a prior on the number of components \citep{kruijer2010adaptive,shen2013adaptive}.
% Note to self: This is in Theorem 2 of Shen et al 2013.
Elegant results have also been provided showing posterior consistency and contraction rates for estimation of the discrete mixing distribution \citep{nguyen2013convergence} when using either class of models, as well as consistency for the number of components \citep{guha2021posterior}.
% Note to self: This is in Theorem 3.1 of Guha et al (2021).

% Nearly all such results require relatively strong conditions, for example,
% H\"{o}lder smoothness of the true density,
% moment and tail conditions on the true density,
% restriction to certain location-scale families,
% strong identifiability and smoothness of the component densities,
% compact parameter space,
% tail conditions on the prior on the number of components,
% and so on.

Meanwhile, it has long been known that Doob's theorem \citep{Doob_1949} can be used to prove almost sure consistency for the number of components as well as the mixture weights and the component parameters, up to a permutation \citep{Nobile_1994}.
Interestingly, in contrast to the modern theory mentioned above, a Doob-type result can be extraordinarily general, holding under very minimal conditions.
Doob's theorem has been criticized for only guaranteeing consistency on a set of probability one under the prior, and thus, a poorly chosen prior can lead to a useless result \citep{roeder1997practical}; however, for many models, this is a straw man argument since a well-chosen prior can lead to a consistency guarantee at Lebesgue almost-all parameter values.
% Note to self: \citep{ghosal2017fundamentals} mention the previous point abstractly (not for mixture per se) at end of Section 6.2.

While the result of \citet{Nobile_1994} was prescient and general, it has some disadvantages.
First, \citet{Nobile_1994} assumes some conditions that are not needed, specifically,
(i) that there is a sigma-finite measure $\mu$ such that for all $v$, the component distribution $F_v$ has a density $f_v$ with respect to $\mu$,
% (i) that the component distribution $F_v$ has a density $f_v$ with respect to a common sigma-finite measure for all $v$,
(ii) that $v \mapsto f_v(x)$ is continuous for all $x$, and
(iii) employing a somewhat complicated algorithm for mapping parameters into an identifiable space.
% First, it is difficult to use as a reference since the exposition is technically advanced and requires significant effort to unpack.
Further, it is difficult to use \citet{Nobile_1994} as a reference since the exposition is quite technical and requires significant effort to unpack.

In this article, we present a Doob-type consistency result for mixtures, with the goal of maximizing clarity, generality, and ease of use.
Our result generalizes upon the work of \citet{Nobile_1994} in that we do not require conditions (i)--(iii) above.
We formulate the result directly in terms of the original parameter space (rather than a transformed space as done by \citealp{Nobile_1994}), reflecting the way these models are used in practice.
Further, we provide conditions under which consistency holds almost everywhere with respect to Lebesgue measure,
rather than just almost everywhere with respect to the prior as done by \citet{Nobile_1994}.

Compared to the modern theory, the limitation of a Doob-type result is that for any given true parameter value, the theorem cannot tell us whether it is in the measure zero set where consistency may fail.
% However, in the same way, the strong law of large numbers cannot tell us whether $n^{-1}\sum_{i=1}^n x_i$ converges for any given sequence $x_1,x_2,\ldots$, and yet one rarely hears this criticism leveled against the strong law.
Another important caveat is that the data are required to be generated from the assumed class of finite mixture models.
Most consistency results are based on an assumption of model correctness, and the result we present is no different in that respect.
However, unfortunately, the posterior on the number of components in a mixture model is especially sensitive to model misspecification \citep{miller2018robust,cai2021finite}, so any inferences about the number of components should be viewed with extreme skepticism.
% In reality, of course, small departures from the model are to be expected,
On the other hand, \citet{miller2013simple,miller2014inconsistency} show that popular nonparametric mixture models (such as Dirichlet process mixtures) are not even consistent for the number of components when the component family is correctly specified --- and this lack of consistency is an even more fundamental concern than sensitivity to misspecification.
Thus, although finite mixture models are rarely---if ever---exactly correct, having a consistency guarantee at least provides an assurance that the methodology is coherent.

In practice, mixture models with a prior on the number of components often provide useful insights into heterogeneous data and, as the saying goes, ``all models are wrong but some are useful'' \citep{box1979robustness}.
Mixtures are extensively used in a wide range of applications, and modern algorithms facilitate posterior inference when placing a prior on the number of components; see \citet{miller2018mixture} and references therein.
Thus, it is important to characterize the theoretical properties of these models as generally as possible.

The article is organized as follows.
In \cref{section:model}, we describe the class of models under consideration and we introduce the conditions to be assumed. 
In \cref{section:results}, we state our main results,
and \cref{section:proofs} contains the proofs.
% Finally, \cref{section:details} provides some minor supporting results.

\section{Model}
\label{section:model}

Let $(F_v: v\in \V)$ be a family of probability measures on $\X$, where $\V\subseteq\R^D$ is measurable and
$\X$ is a Borel measurable subset of a complete separable metric space, equipped with the Borel sigma-algebra.
For all $d$, we give $\R^d$ the Euclidean topology and the resulting Borel sigma-algebra.
For $k \in \{1,2,\ldots\}$, define
$\Delta_k := \{w\in(0,1)^k:\sum_{i = 1}^k w_i = 1\}\subseteq\R^k$.
For $w \in \Delta_k$ and $v\in\V^k$, define a probability measure
\begin{align}
\label{equation:distribution}
P_{w,v} =\sum_{i = 1}^k w_i F_{v_i}
\end{align}
on $\X$.  Thus, $P_{w,v}$ is the mixture with weights $w_i$ and component parameters $v_i$.

Let $\pi$, $D_k$, and $G_k$ be probability measures on $\{1,2,\ldots\}$, $\Delta_k$, and $\V^k$, respectively.
Consider the following model:
\begin{align}
\label{equation:model}
\begin{split}
\text{(number of components)}~~~~ & K\sim \pi \\
\text{(mixture weights)}~~~~ & W \mid K = k \,\,\sim D_k \text{ where $W = (W_1,\ldots,W_k)$ } \\
\text{(component parameters)}~~~~ & V \mid K = k \,\,\sim G_k \text{ where $V = (V_1,\ldots,V_k)$ } \\
\text{(observed data)}~~~~ & X_1,\ldots,X_n \mid W,V \,\,\sim P_{W,V} \text{ i.i.d. }
\end{split}
\end{align}

We use uppercase letters to denote random variables, such as $K$, and lowercase to denote particular values, such as $k$.

\subsection{Conditions} 
\label{section:conditions}

\begin{condition}[Family of component distributions]
\label{condition:components} ~
\begin{enumerate}
\item\label{condition:measurability} 
For all measurable $A\subseteq\X$, the function $v\mapsto F_v(A)$ is measurable on $\V$.
\item\label{condition:identifiability} (Finite mixture identifiability) 
For all $k,k'\in\{1,2,\ldots\}$, $w\in\Delta_k$, $w'\in\Delta_{k'}$, $v\in\V^k$, and $v'\in\V^{k'}$,
if $P_{w,v} = P_{w',v'}$ then $\sum_{i=1}^k w_i \delta_{v_i} = \sum_{i=1}^{k'} w_i' \delta_{v_i'}$.
\end{enumerate}
\end{condition}

Here, $\delta_x$ denotes the unit point mass at $x$.
Roughly,
\cref{condition:components}(\ref{condition:measurability}) is that $(F_v : v\in\V)$ is a measurable family and
\cref{condition:components}(\ref{condition:identifiability}) is that the discrete mixing distribution $\sum_{i=1}^k w_i \delta_{v_i}$ is uniquely determined by $P_{w,v}$.
Let $S_k$ denote the set of permutations of $\{1,\ldots,k\}$.
% To avoid repetition, when we introduce an arbitrary subset $A$ of a measurable space, we mean a measurable subset.

\begin{condition}[Prior] Under the model in \cref{equation:model}, for all $k\in\{1,2,\ldots\}$,
\label{condition:prior} ~
\begin{enumerate}
\item\label{condition:pi} $\Pr(K=k)>0$,
\item\label{condition:D} for all $A\subseteq\Delta_k$ measurable, if $\Pr(W\in A\mid K=k) = 0$ then $\{w_{1:k-1} : w\in A\}$ has Lebesgue measure zero,
\item\label{condition:G} for all $A\subseteq \V^k$ measurable, if $\sum_{\sigma\in S_k} \Pr(V_\sigma\in A \mid K=k) = 0$ then $A$ has Lebesgue measure zero,
\item\label{condition:distinct} $\Pr(V_i = V_j \mid K=k) = 0$ for all $1\leq i < j \leq k$.
\end{enumerate}
\end{condition}

Here, $w_{1:k-1} = (w_1,\ldots,w_{k-1})$ and $V_\sigma = (V_{\sigma_1},\ldots,V_{\sigma_k})$.
Roughly, Conditions \ref{condition:prior}(1--3) are that the prior gives positive mass to all $k$ and all sets with nonzero Lebesgue measure, for some permutation of the component labels.
\cref{condition:prior}(\ref{condition:distinct}) is that the component parameters are distinct with prior probability $1$.
Note that we do not assume $W|k$ and $V|k$ have densities with respect to Lebesgue measure.

\subsection{Examples} 

The conditions in \cref{section:conditions} hold for many commonly used mixture models.
For $F_v$, the family of component distributions, there are many commonly used choices that satisfy \cref{condition:components}(\ref{condition:identifiability}), 
including the multivariate normal \citep{yakowitz1968identifiability} 
and, more generally, many elliptical families such as the multivariate $t$ distributions \citep{holzmann2006identifiability}.
Several discrete families such as the
Poisson, geometric, negative binomial, and many other power-series distributions also satisfy \cref{condition:components}(\ref{condition:identifiability}) \citep{sapatinas1995identifiability}.
In each of these cases, \cref{condition:components}(\ref{condition:measurability}) can be easily verified using \citet[Theorem 2.37]{folland2013real}.
% , given measurability of $(x,v) \mapsto f_v(x)$ where $f_v$ is the density of $F_v$.
% Note to self: The measurability condition can be checked by using continuity of N(x|mu,C) as a function of (x,mu,C), along with Folland Thm 2.37(a).

For the prior on the mixture weights $W|k$, \cref{condition:prior}(\ref{condition:D}) is satisfied by choosing $W|k \sim \mathrm{Dirichlet}(\alpha_{k 1},\ldots,\alpha_{k k})$ for any $\alpha_{k 1},\ldots,\alpha_{k k}>0$, since this has a density with respect to $(k-1)$-dimensional Lebesgue measure $d w_1 \cdots d w_{k-1}$ and this density is strictly positive on $\Delta_k$.  More generally, for the same reason, \cref{condition:prior}(\ref{condition:D}) is satisfied if $W|k$ is defined as follows: let $Z_i \sim \mathrm{Beta}(a_{k i},b_{k i})$ independently for $i\in\{1,\ldots,k-1\}$ where $a_{k i},b_{k i}>0$, then set $W_i = Z_i \prod_{j=1}^{i-1} (1-Z_j)$ for $i\in\{1,\ldots,k-1\}$ and $W_k = 1-\sum_{i=1}^{k-1} W_i$; this is called the generalized Dirichlet distribution \citep{ishwaran2001gibbs,connor1969concepts}.

For the prior on the component parameters $V|k$, perhaps the most common situation is that $V_1,\ldots,V_k$ are i.i.d.\ from some distribution $G_0$; in this case, Conditions \ref{condition:prior}(\ref{condition:G}) and \ref{condition:prior}(\ref{condition:distinct}) are satisfied if $G_0$ has a density with respect to Lebesgue measure and this density is strictly positive on $\V$ except for a set of Lebesgue measure zero.
A more interesting example is the case of repulsive mixtures, which use a non-independent prior on component parameters to favor well-separated mixture components.
For instance, \citet{petralia2012repulsive} propose defining $V|k$ to have a density (with respect to Lebesgue measure) proportional to $h(v) \prod_{i=1}^k g_0(v_i)$ where $g_0$ is a probability density on $\V$ and $h:\V^k\to\R$ is either $h(v) = \prod_{1\leq i< j\leq k} \rho(\|v_i - v_j\|)$ or 
$h(v) = \min_{1\leq i< j\leq k} \rho(\|v_i - v_j\|)$,
where $\rho:[0,\infty)\to\R$ is a strictly increasing, bounded function with $\rho(0) = 0$.
Then Conditions \ref{condition:prior}(\ref{condition:G}) and \ref{condition:prior}(\ref{condition:distinct}) are satisfied as long as $g_0$ is strictly positive on $\V$ except for a set of Lebesgue measure zero.

\section{Main results}
\label{section:results}

We show that for any model as in \cref{equation:model} satisfying Conditions \ref{condition:components} and \ref{condition:prior},
the posterior is consistent for $k$, $w$, and $v$ up to a permutation of the component labels, except on a set of Lebesgue measure zero.
More generally, if only Conditions \ref{condition:components} and \ref{condition:prior}(\ref{condition:distinct}) are satisfied, then
the result holds except on a set of prior measure zero.

Define $\Theta_k := \Delta_k \times \V^k$ and $\Theta :=\bigcup_{k = 1}^\infty \Theta_k$,
noting that $\Theta_1,\Theta_2,\ldots$ are disjoint sets.
Thus, for any $\theta\in\Theta$, we have $\theta = (w,v)$ for some unique $w\in\Delta_k$, $v\in\V^k$, and $k\in\{1,2,\ldots\}$;
let $k(\theta)$ denote this value of $k$.
In terms of $\theta$, the data distribution is $P_\theta = P_{w,v}$, 
where $P_{w,v}$ is defined in \cref{equation:distribution}.

We define a metric on $\Theta$ as follows: for $\theta,\theta'\in\Theta$, let
\begin{align}
\label{equation:d}
d_\Theta(\theta,\theta') =
    \left\{
	\begin{array}{ll}
		\min\{\|\theta - \theta'\|,1\}  & \text{if } k(\theta) = k(\theta'), \\
		1 & \text{otherwise,}
	\end{array}
\right.
\end{align}
where $\|\cdot\|$ is the Euclidean norm on $\Delta_k\times \V^k\subseteq\R^{k + k D}$.
\cref{proposition:union,proposition:subsets} show that $d_\Theta$ is indeed a metric
and $\Theta$ is a Borel measurable subset of a complete separable metric space; we give $\Theta$ the resulting Borel sigma-algebra.
Recall that $S_k$ denotes the set of permutations of $\{1,\ldots,k\}$.
For $\sigma\in S_k$ and $\theta\in\Theta_k$, let $\theta[\sigma]$ denote the transformation of $\theta$ obtained by permuting the component labels, that is, if $\theta = (w,v)$ then
$\theta[\sigma] := (w_\sigma,v_\sigma)$ where $w_\sigma = (w_{\sigma_1},\ldots,w_{\sigma_k})$ and $v_\sigma = (v_{\sigma_1},\ldots,v_{\sigma_k})$.
For $\theta_0\in\Theta_k$ and $\varepsilon>0$, define 
\begin{align}
\label{equation:Btilde}
\tilde{B}(\theta_0,\varepsilon) = \bigcup_{\sigma\in S_k} \big\{\theta\in\Theta : d_\Theta(\theta,\theta_0[\sigma]) < \varepsilon\big\}.
\end{align}
% For $\theta,\theta'\in\Theta$, define 
% $$\tilde{d}_\Theta(\theta,\theta') = \min\{d_\Theta(\theta,\theta'_\sigma) : \sigma\in S_{k(\theta')}\}.$$

Consider the model in \cref{equation:model}, and define the random variable $\uptheta := (W,V)$.

\begin{theorem}
\label{theorem:consistency}
Assume Conditions \ref{condition:components} and \ref{condition:prior}(\ref{condition:distinct}) hold.
There exists $\Theta_* \subseteq \Theta$ such that $\Pr(\uptheta\in\Theta_*) = 1$ and for all $\theta_0\in\Theta_*$, 
if $X_1,X_2,\ldots\sim P_{\theta_0}$ i.i.d.\ then for all $\varepsilon>0$,
% the posterior distribution of $\uptheta$ given $X_1,\ldots,X_n$ concentrates at $\theta_0$ almost surely. In other words, for any $\varepsilon>0$, we have
% $$\lim_{n\to\infty} \Pr(\tilde{d}_\Theta(\uptheta,\theta_0) < \varepsilon \mid X_1,\ldots,X_n) = 1 ~~~\mathrm{a.s.}[P_{\theta_0}],$$
\begin{align}
\label{equation:consistency}
\lim_{n\to\infty} \Pr(\uptheta\in\tilde{B}(\theta_0,\varepsilon) \mid X_1,\ldots,X_n) = 1 ~~~\mathrm{a.s.}[P_{\theta_0}]
\end{align}
and
\begin{align}
\label{equation:K-consistency}
\lim_{n\to\infty} \Pr(K = k(\theta_0) \mid X_1,\ldots,X_n) = 1 ~~~\mathrm{a.s.}[P_{\theta_0}].
\end{align}
\end{theorem}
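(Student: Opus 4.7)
My plan is to reduce the claim to a direct application of Doob's theorem on a quotient parameter space in which the label-permutation ambiguity is removed. Define an equivalence relation on $\Theta$ by $\theta \sim \theta'$ iff $k(\theta) = k(\theta')$ and $\theta' = \theta[\sigma]$ for some $\sigma \in S_{k(\theta)}$, let $\bar\Theta := \Theta / {\sim}$, write $[\theta]$ for the equivalence class of $\theta$, and equip $\bar\Theta$ with the quotient metric $\bar d([\theta],[\theta']) := \min_{\sigma \in S_{k(\theta)}} d_\Theta(\theta,\theta'[\sigma])$ when $k(\theta) = k(\theta')$, and $\bar d = 1$ otherwise. By construction, the ball $\tilde B(\theta_0,\varepsilon)$ of \cref{equation:Btilde} is exactly the preimage under $\theta \mapsto [\theta]$ of the open $\bar d$-ball of radius $\varepsilon$ around $[\theta_0]$.

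Next I would verify the hypotheses of Doob's theorem for the quotient model $[\theta] \mapsto P_\theta$. Using the disjoint-union structure $\Theta = \bigcup_k \Theta_k$, the finiteness of each $S_k$, and the fact that $(\Theta,d_\Theta)$ is a Borel subset of a Polish space, one checks that $\bar\Theta$ with its quotient Borel structure is a standard Borel space and that the quotient map $\theta \mapsto [\theta]$ is measurable, so the prior induces a well-defined pushforward on $\bar\Theta$. Measurability of the family $[\theta] \mapsto P_\theta$ is inherited from \cref{condition:components}(\ref{condition:measurability}). Identifiability on the prior-full set $\bar\Theta_0 := \{[\theta] : v_1,\ldots,v_{k(\theta)} \text{ distinct}\}$ --- which has pushforward mass $1$ by \cref{condition:prior}(\ref{condition:distinct}) --- follows from \cref{condition:components}(\ref{condition:identifiability}): if $P_\theta = P_{\theta'}$ then the discrete mixing measures $\sum_i w_i \delta_{v_i}$ coincide, and the distinctness of the atoms forces $\theta' = \theta[\sigma]$ for some $\sigma$.

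Doob's theorem then yields a set $\bar\Theta_* \subseteq \bar\Theta_0$ of full pushforward prior mass such that, for each $[\theta_0] \in \bar\Theta_*$, the posterior $\Pr([\uptheta] \in B \mid X_1,\ldots,X_n) \to 1$ a.s.\ $[P_{\theta_0}]$ for every open neighborhood $B$ of $[\theta_0]$ in $\bar\Theta$. Taking $\Theta_*$ to be the preimage of $\bar\Theta_*$ in $\Theta$ (still of prior mass $1$), \cref{equation:consistency} follows because $\tilde B(\theta_0,\varepsilon)$ is the preimage of a $\bar d$-ball. Equation \cref{equation:K-consistency} is then immediate: for $\varepsilon \in (0,1)$ the definition of $d_\Theta$ forces $k(\uptheta) = k(\theta_0)$ whenever $\uptheta \in \tilde B(\theta_0,\varepsilon)$, so posterior concentration on $\tilde B$ implies posterior concentration on $\{K = k(\theta_0)\}$; the case $\varepsilon \ge 1$ reduces to $\varepsilon = 1/2$ by monotonicity.

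The main obstacle I expect is carrying out the measure-theoretic bookkeeping cleanly: verifying that $(\bar\Theta,\bar d)$ is a standard Borel space and that the quotient map is measurable. The $S_k$-action is free on $\Theta_0$ but not on all of $\Theta$ (it fixes points where some $v_i = v_j$), so one must argue that this does not spoil the Borel structure required for Doob's theorem. Restricting attention to $\Theta_0$ from the outset --- which is legitimate since it has prior probability $1$ --- is the natural way to circumvent this, after which the identifiability and consistency steps are essentially formal consequences of the two parts of \cref{condition:components}.
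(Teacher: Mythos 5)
Your proposal is correct and follows essentially the same strategy as the paper: remove the label-permutation ambiguity, verify identifiability (via \cref{condition:components}(\ref{condition:identifiability}) plus the distinctness of the atoms) and measurability, apply Doob's theorem on the de-labeled space, and pull the resulting neighborhoods back to $\Theta$ to obtain \cref{equation:consistency}, with \cref{equation:K-consistency} following from the structure of $d_\Theta$ for $\varepsilon<1$. The only substantive difference is in the realization of the de-labeled space: where you pass to the quotient $\Theta/{\sim}$ and must then verify it is standard Borel, the paper instead uses a measurable section --- mapping each $\theta$ to its lexicographically ordered representative $T(\theta)\in\rTheta$ --- which exhibits the quotient directly as a Borel subset of a Polish space and thereby sidesteps exactly the measure-theoretic bookkeeping obstacle you flag at the end.
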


Here, the conditional probabilities are under the assumed model in \cref{equation:model};
note that $\uptheta\mid X_1,\ldots,X_n$ has a regular conditional distribution by \citet[Theorems 1.4.12 and 4.1.6]{durrett2019probability}.
Now, define a measure $\lambda$ on $\Theta$ as follows.
Let $\lambda_{\V^k}$ denote Lebesgue measure on $\V^k$, and let $\lambda_{\Delta_k}$ denote the measure on $\Delta_k$ such that, for all $A \subseteq \Delta_k$ measurable, 
$\lambda_{\Delta_k}(A)$ equals the Lebesgue measure of $\{w_{1:k-1} : w \in A\}\subseteq\R^{k-1}$.
Define $\lambda(A) := \sum_{k=1}^\infty (\lambda_{\Delta_k}\times \lambda_{\V^k})(A \cap \Theta_k)$
for all measurable $A\subseteq \Theta$.
In essence, $\lambda$ can be thought of as Lebesgue measure on $\Theta$.

\vspace{0em}
\begin{theorem}
\label{theorem:lebesgue}
If Conditions \ref{condition:components} and \ref{condition:prior} hold, then the set $\Theta_*$ in \cref{theorem:consistency}
can be chosen such that $\lambda(\Theta\setminus\Theta_*) = 0$.
\end{theorem}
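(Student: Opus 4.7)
The plan is to use the conditions on the prior to upgrade the prior-null exceptional set from \cref{theorem:consistency} to a Lebesgue-null exceptional set. First, I enlarge $\Theta_*$ on each $\Theta_k$ to $\bigcup_{\sigma \in S_k}\{\theta[\sigma]:\theta \in \Theta_* \cap \Theta_k\}$, making it invariant under the permutation action $\theta \mapsto \theta[\sigma]$. This enlargement still satisfies the conclusion of \cref{theorem:consistency}, since $P_\theta = P_{\theta[\sigma]}$ and $\tilde B(\theta_0[\sigma],\varepsilon) = \tilde B(\theta_0,\varepsilon)$, and the enlarged set still has prior probability one. Consequently $N := \Theta \setminus \Theta_*$ is permutation-symmetric, and since $\lambda(A) = \sum_k (\lambda_{\Delta_k}\times\lambda_{\V^k})(A\cap \Theta_k)$, it suffices to show $(\lambda_{\Delta_k}\times \lambda_{\V^k})(N_k) = 0$ for each $k$, where $N_k := N \cap \Theta_k$.

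By \cref{theorem:consistency} and \cref{condition:prior}(\ref{condition:pi}), together with the conditional independence of $W$ and $V$ given $K$ built into \cref{equation:model}, $(D_k \times G_k)(N_k) = 0$, so Fubini gives $G_k(N_{k,w}) = 0$ for $D_k$-a.e.\ $w$, where $N_{k,w} := \{v \in \V^k : (w,v) \in N_k\}$. Now \cref{condition:prior}(\ref{condition:D}) is exactly the statement $\lambda_{\Delta_k} \ll D_k$, so the set $A := \{w \in \Delta_k : G_k(N_{k,w}) > 0\}$ has $\lambda_{\Delta_k}(A) = 0$. The main obstacle at this point is that \cref{condition:prior}(\ref{condition:G}) does not let us conclude $\lambda_{\V^k}(N_{k,w}) = 0$ from $G_k(N_{k,w}) = 0$ alone; we need the symmetrized quantity $\sum_\sigma \Pr(V_\sigma \in N_{k,w} \mid K=k)$ to vanish.

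I resolve this with two observations. First, $\lambda_{\Delta_k}$ is invariant under the coordinate-permutation action $w \mapsto w_\sigma$: in the coordinates $w_{1:k-1}$ used to define $\lambda_{\Delta_k}$, each such permutation becomes an affine bijection of $\R^{k-1}$ whose matrix rows are either standard basis vectors or $(-1,\dots,-1)$, with determinant $\pm 1$ (equivalently, $\lambda_{\Delta_k}$ agrees up to a fixed scaling with $(k-1)$-dimensional Hausdorff measure on the affine slice $\Delta_k$, which is preserved by the orthogonal action). Hence $A^* := \bigcup_{\sigma \in S_k} \{w_\sigma : w \in A\}$ is $\lambda_{\Delta_k}$-null. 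Second, using the permutation symmetry of $N_k$ and the identity $(v_\sigma)_{\sigma^{-1}} = v$, a direct check gives $\{v : v_\sigma \in N_{k,w}\} = N_{k, w_{\sigma^{-1}}}$, so for $w \notin A^*$ every summand $\Pr(V_\sigma \in N_{k,w} \mid K=k) = G_k(N_{k, w_{\sigma^{-1}}})$ is zero. Then \cref{condition:prior}(\ref{condition:G}) gives $\lambda_{\V^k}(N_{k,w}) = 0$ for $\lambda_{\Delta_k}$-a.e.\ $w$; a final application of Fubini yields $(\lambda_{\Delta_k}\times\lambda_{\V^k})(N_k) = 0$, and summing over $k$ produces $\lambda(\Theta \setminus \Theta_*) = 0$.
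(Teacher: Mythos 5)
Your proof is correct and takes essentially the same route as the paper's: reduce to showing $(\lambda_{\Delta_k}\times\lambda_{\V^k})(\Theta_k\setminus\Theta_*)=0$ for each $k$ using \cref{condition:prior}(\ref{condition:pi}), then combine permutation invariance of the exceptional set and of $\lambda_{\Delta_k}$ (same Jacobian-determinant argument) with the absolute continuity statements $\lambda_{\Delta_k}\ll D_k$ and $\lambda_{\V^k}\ll\sum_{\sigma}G_k^{\sigma}$. The only differences are cosmetic: you carry out the product-measure absolute continuity step explicitly via Fubini on sections rather than citing it as a lemma, and you enlarge $\Theta_*$ to be permutation-invariant, which is a valid but unnecessary step since the $\Theta_*$ constructed in the proof of \cref{theorem:consistency} already has this property.
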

\vspace{0em}

In other words, for $\lambda$-almost all values of $\theta_0$ in $\Theta$, 
if $X_1,X_2,\ldots\sim P_{\theta_0}$ i.i.d.\ then for all $\varepsilon>0$,
\cref{equation:consistency,equation:K-consistency} hold $P_{\theta_0}$-almost surely.

% In other words, when the true parameter $\theta_0$ is any value in $\Theta$ except for a set of measure zero under $\lambda$,
% \cref{equation:consistency,equation:K-consistency} hold almost surely.

\section{Proofs}
\label{section:proofs}

\begin{proof}[{\bf Proof of \cref{theorem:consistency}}]
The basic idea of the proof is to use Doob's theorem on posterior consistency \citep{Doob_1949,miller2018detailed}.
However, Doob's theorem cannot be directly applied since it requires identifiability, and 
while we assume identifiability of $\sum_{i=1}^k w_i \delta_{v_i}$ in \cref{condition:components}(\ref{condition:identifiability}), this does not imply identifiability of $(w,v)$ due to 
(a) invariance of $P_{w,v}$ with respect to permutation of the component labels and (b) the existence of points in $\Theta$ where $v_i = v_j$.
To handle this, we consider a certain restricted parameter space on which identifiability holds for $(w,v)$, we apply Doob's theorem to a collapsed model on this restricted space, and we then show that this implies the claimed result on all of $\Theta$. 

% Given $K = k$, let $U_1,\ldots,U_k\sim G$ i.i.d., and let $V =(V_1,\ldots,V_k) = (U_{(1)},\ldots,U_{(k)})$ be the order statistics of the $U$'s, so that $V_1\leq\cdots\leq V_k$. Note that $V\in \V_k$ with probability 1 (since the sets $\{u: u_i=u_j\}$ have Lebesgue measure zero, and $G$ has a density with respect to Lebesgue measure).  Let $G_k$ be the probability measure on $\V_k$ obtained by restricting the distribution of $V$ to $\V_k$.

\textit{Identifiability constraints.}
We constrain the component parameters as follows to obtain identifiability of $(w,v)$. Putting the dictionary order (also known as lexicographic order) on elements of $\V\subseteq\R^D$,
define
$$ \V_k := \big\{(v_1,\ldots,v_k)\in \V^k: v_1\prec\cdots\prec v_k\big\}\subseteq \R^{k D}. $$
Here, $v_i\prec v_j$ denotes that $v_i$ precedes $v_j$ and $v_i \neq v_j$.
Define $\rTheta_k := \Delta_k\times\V_k$ and $\rTheta := \bigcup_{k=1}^\infty \rTheta_k$.
Then $\rTheta$ is a Borel measurable subset of a complete separable metric space
under the metric $d_\Theta$ as defined in \cref{equation:d};
this follows from \cref{proposition:union,proposition:subsets} 
by taking $\X_k = \R^{k + k D}$, $d_k(x,y) = \|x - y\|$ for $x,y\in\X_k$, and $A_k = \rTheta_k$ for $k\in\{1,2,\ldots\}$.
% (Note that $\Delta_k$ is Borel measurable and $\V_k$ is Borel measurable, as the intersection of measurable sets of the form $\{v\in \V^k: v_i\prec v_{i+1}\}$.)

\textit{Collapsed model.}
For $\theta\in\Theta_k$, define $T(\theta) = \theta[\sigma]$ where $\sigma\in S_k$ is chosen such that
$\theta[\sigma]\in\rTheta_k$ if possible, and otherwise $\theta[\sigma] = \theta$.
Then $\Pr(T(\uptheta)\in\rTheta) = 1$
since the subset of $\V^k$ where two or more $v_i$'s coincide has prior probability zero, by \cref{condition:prior}(\ref{condition:distinct}).
Denoting $B[\sigma] = \{\theta[\sigma] : \theta\in B\}$,
note that by the definition of $T$, for all $B\subseteq\rTheta_k$,
\begin{align}
\label{equation:relationship}
T^{-1}(B) = \{\theta\in\Theta : T(\theta)\in B\} = \textstyle\bigcup_{\sigma\in S_k} B[\sigma].
\end{align}
Letting $\tilde{Q}$ denote the distribution of $T(\uptheta)$, restricted to $\rTheta$, we have
\begin{align}
\label{equation:collapsed}
\begin{split}   
& T(\uptheta) \sim \tilde{Q} \\
& X_1,\ldots,X_n\mid T(\uptheta) \sim P_{T(\uptheta)} ~\text{ i.i.d.}
\end{split}
\end{align}
by \citet[Theorem 10.2.1]{dudley2002real} since $P_\theta = P_{T(\theta)}$ and 
for all $A\subseteq \X^n$ and $B\subseteq\rTheta$ measurable,
$\Pr(X_{1:n}\in A,\, T(\uptheta)\in B) = \Pr(X_{1:n}\in A,\, \uptheta\in T^{-1}(B)) = \int_B P_\theta^{(n)}(A) d\tilde{Q}(\theta)$,
where $X_{1:n} = (X_1,\ldots,X_n)$;
measurability of $\theta \mapsto P_\theta^{(n)}(A)$ for $A\subseteq\X^n$ follows from measurability of $\theta\mapsto P_\theta(A)$ for $A\subseteq\X$ (shown below at \cref{equation:measurable}) 
along with \citet[Lemma 5.2]{miller2018detailed}.
We refer to \cref{equation:collapsed} as the collapsed model.

\textit{Applying Doob's theorem.}
We show that the collapsed model in \cref{equation:collapsed} satisfies the conditions of Doob's theorem \citep{miller2018detailed}.
First, we check identifiability.
Let $\theta,\theta'\in\rTheta$ such that $P_\theta = P_{\theta'}$. 
By \cref{condition:components}(\ref{condition:identifiability}), $\sum_{i=1}^{k} w_i \delta_{v_i} = \sum_{i=1}^{k'} w_i' \delta_{v_i'}$
where $\theta = (w,v)$, $\theta' = (w',v')$, $k = k(\theta)$, and $k' = k(\theta')$.
By the definition of $\rTheta$, $v_1,\ldots,v_k$ are all distinct, $v_1',\ldots,v_{k'}'$ are all distinct, $w_1,\ldots,w_k > 0$, and $w_1',\ldots,w_{k'}' > 0$. 
This implies that $k=k'$, $w = w'_\sigma$, and $v = v'_\sigma$ for some $\sigma\in S_k$.
Further, because $v_1\prec\cdots\prec v_k$ and $v_1'\prec\cdots\prec v_k'$ by the definition of $\rTheta$,
it must be the case that $\sigma$ is the identity permutation, thus, $w = w'$ and $v = v'$, that is, $\theta = \theta'$.
Therefore, $\theta = (w,v)$ is identifiable on the restricted space $\rTheta$.

Next, we check measurability.
Let $A\subseteq\X$ be measurable. Then for any $k\in\{1,2,\ldots\}$,
\begin{align}
\label{equation:measurable}
\theta\mapsto P_\theta(A)=\sum_{i = 1}^k w_i F_{v_i}(A)
\end{align}
is measurable as a function on $\Theta_k = \Delta_k\times\V^k$, since the projections $(w,v)\mapsto w_i$ and $(w,v)\mapsto v_i$ are measurable, and $v_i\mapsto F_{v_i}(A)$ is measurable on $\V$ by \cref{condition:components}(\ref{condition:measurability}). 
Therefore, $\theta\mapsto P_\theta(A)$ is measurable as a function on $\rTheta_k=\Delta_k\times\V_k\subseteq\Delta_k\times\V^k$. It follows that it is measurable as a function on $\rTheta$ (since the pre-image of a measurable subset of $\R$ is a union of measurable subsets of $\rTheta_1,\rTheta_2,\ldots$ respectively, and is thus measurable by \cref{proposition:subsets} below).

Thus, by Doob's theorem \citep{miller2018detailed}, there exists $\rTheta_*\subseteq\rTheta$ such that 
$\Pr(T(\uptheta)\in \rTheta_*) = 1$ and the collapsed model is consistent at all $T(\theta_0)\in\rTheta_*$;
that is, for any neighborhood $B\subseteq\rTheta$ of $T(\theta_0)$, we have $\Pr(T(\uptheta)\in B \mid X_{1:n}) \to 1$ a.s.[$P_{T(\theta_0)}$].
Define $\Theta_*$ to be the set of all points in $\Theta$ that can be obtained by permuting the mixture components of a point in $\rTheta_*$, 
that is, $\Theta_* := \bigcup_{k=1}^\infty \bigcup_{\sigma\in S_k} (\rTheta_*\cap\rTheta_k)[\sigma]$. Then by \cref{equation:relationship},
\begin{align*}
\Pr(\uptheta\in\Theta_*) = \Pr(T(\uptheta)\in\rTheta_*) = 1.
\end{align*}

% \begin{align*}
% \Pr(\uptheta\in\Theta_*) 
% = \sum_{k=1}^\infty \Pr\Big(\uptheta\in{\textstyle\bigcup_{\sigma\in S_k}} (\rTheta_*\cap\rTheta_k)[\sigma]\Big) 
% \stackrel{\textup{(a)}}{=} \sum_{k=1}^\infty \Pr(T(\uptheta)\in\rTheta_*\cap\rTheta_k) 
% \stackrel{\textup{(b)}}{=} \Pr(T(\uptheta)\in\rTheta_*) = 1
% \end{align*}
% where (a) is by the definition of $T$ and (b) is since $\bigcup_{k=1}^\infty (\rTheta_*\cap\rTheta_k) = \rTheta_*$.

\textit{Putting the pieces together.}
Let $\theta_0\in\Theta_*$ and define $k_0 = k(\theta_0)$. 
Let $X_1,X_2,\ldots\sim P_{\theta_0}$ i.i.d.,  
let $\varepsilon\in (0,1)$, and define
$B := \{\theta\in\rTheta : d_\Theta(\theta,T(\theta_0)) < \varepsilon\} \subseteq \rTheta_{k_0}$. 
Referring to \cref{equation:Btilde}, observe that 
$\cup_{\sigma\in S_{k_0}} B[\sigma] \subseteq \tilde{B}(\theta_0,\varepsilon)$. 
Hence, by \cref{equation:relationship},

\begin{align}
\label{equation:convergence}
\Pr(\uptheta\in\tilde{B}(\theta_0,\varepsilon) \mid X_{1:n})
\geq \Pr(\uptheta\in\cup_{\sigma\in S_{k_0}} B[\sigma] \mid X_{1:n}) 
= \Pr(T(\uptheta)\in B \mid X_{1:n}) 
\xrightarrow[n\to\infty]{\mathrm{a.s.}} 1
\end{align}
since $P_{\theta_0} = P_{T(\theta_0)}$ and the collapsed model is consistent at all $T(\theta_0)\in\rTheta_*$.
This proves \cref{equation:consistency}.
\cref{equation:K-consistency} follows directly from \cref{equation:convergence}, since $\varepsilon<1$ implies $\tilde{B}(\theta_0,\varepsilon) \subseteq \Theta_{k_0}$, and therefore,
\begin{align*}
\Pr(K = k_0 \mid X_{1:n}) 
= \Pr(\uptheta \in \Theta_{k_0} \mid X_{1:n}) 
\geq \Pr(\uptheta\in\tilde{B}(\theta_0,\varepsilon) \mid X_{1:n})
\xrightarrow[n\to\infty]{\mathrm{a.s.}} 1.
\end{align*}
\end{proof}
% Note to self: There is no point at which we need to have densities $f_v$ with respect to a common sigma-finite measure.

\begin{proof}[{\bf Proof of \cref{theorem:lebesgue}}]
Define $\Theta_*$ as in the proof of \cref{theorem:consistency}.
Since $\Pr(\uptheta\in\Theta_*) = 1$, 
$$ 0 = \Pr(\uptheta\in\Theta\setminus\Theta_*) = \sum_{k=1}^\infty \Pr(\uptheta\in\Theta_k\setminus\Theta_* \mid K=k)\, \Pr(K = k). $$
Since $\Pr(K=k)>0$ for all $k$ by \cref{condition:prior}(\ref{condition:pi}), 
$\Pr(\uptheta\in\Theta_k\setminus\Theta_* \mid K=k) = 0$ for all $k$.

% \text{if } \sum_{\sigma\in S_k} \Pr(V_\sigma\in A \mid k) &= 0  \text{ then $A$ has Lebesgue measure zero} \\
% \lambda(A) &= \sum_{k=1}^\infty (\lambda_{\Delta_k}\times \lambda_{\V^k})(A \cap \Theta_k) \\

% $$ (\Theta_*\cap\Theta_k)[\sigma] = \Theta_*\cap\Theta_k $$

For $\sigma\in S_k$, let $D_k^{\sigma}$ and $G_k^{\sigma}$ denote the distributions of $W_\sigma | k$ and $V_\sigma | k$, respectively, under the model.
Note that for all $\sigma\in S_k$, $(\Theta_k\setminus\Theta_*)[\sigma] = \Theta_k\setminus\Theta_*$. Thus,
\begin{align}\label{equation:product-measure}
(D_k^{\sigma} \times G_k^{\sigma})(\Theta_k\setminus\Theta_*) 
= (D_k \times G_k)(\Theta_k\setminus\Theta_*) = \Pr(\uptheta\in\Theta_k\setminus\Theta_* \mid K=k) = 0.
\end{align}
Note that $\lambda_{\Delta_k}$ is invariant under permutations $\sigma\in S_k$, since by \citet[Theorem 2.47]{folland2013real}, Lebesgue measure $d w_1 \cdots d w_{k-1}$ on $\{w_{1:k-1}\in(0,1)^{k-1} : \sum_{i=1}^{k-1} w_i < 1\}$ is invariant under transformations of the form $g(w_{1:k-1}) = (w_{\sigma_1},\ldots,w_{\sigma_{k-1}})$ where $w_k = 1 - \sum_{i=1}^{k-1} w_i$, because the Jacobian determinant is $\pm 1$.
Conditions \ref{condition:prior}(\ref{condition:D}) and \ref{condition:prior}(\ref{condition:G})
are that $\lambda_{\Delta_k} \ll D_k$ and $\lambda_{\V^k} \ll \sum_{\sigma\in S_k} G_k^{\sigma}$, respectively,
where $\ll$ denotes absolute continuity.
Thus, by \citet[Exercise 3.2.12]{folland2013real},
\begin{align}
\label{equation:absolute-continuity}
\lambda_{\Delta_k}\times \lambda_{\V^k} \ll \lambda_{\Delta_k} \times \sum_{\sigma\in S_k} G_k^{\sigma}
= \sum_{\sigma\in S_k} \lambda_{\Delta_k}^{\sigma} \times G_k^{\sigma}
\ll  \sum_{\sigma\in S_k} D_k^{\sigma} \times G_k^{\sigma}.
\end{align}
By \cref{equation:product-measure}, $(D_k^{\sigma} \times G_k^{\sigma})(\Theta_k\setminus\Theta_*) = 0$ for all $\sigma\in S_k$,
and thus, $(\lambda_{\Delta_k}\times \lambda_{\V^k})(\Theta_k\setminus\Theta_*) = 0$ by \cref{equation:absolute-continuity}.
Therefore,
$\lambda(\Theta\setminus\Theta_*) = \sum_{k=1}^\infty (\lambda_{\Delta_k}\times \lambda_{\V^k})(\Theta_k\setminus\Theta_*) = 0$.
\end{proof}

\iffalse
By \citet[Theorem 2.37]{folland2013real}, letting $E_v = \{w \in \Delta_k : (w,v) \in \Theta_k\setminus\Theta_*\}$,
\begin{align*}
\int_{\V^k} D_k^{\sigma}(E_v) d G_k^{\sigma}(v). = (D_k^{\sigma} \times G_k^{\sigma})(\Theta_k\setminus\Theta_*) = 0.
\end{align*}
Thus, $\{v\in\V^k : D_k^{\sigma}(E_v) > 0\}$ has measure zero under $G_k^{\sigma}$, by \citet[Prop. 2.23]{folland2013real}.
By \ref{condition:prior}(\ref{condition:G}), this implies that
\fi

\iffalse
By the Radon--Nikodym theorem \citep{folland2013real}, 
Conditions \ref{condition:prior}(\ref{condition:D}) and \ref{condition:prior}(\ref{condition:G})
imply that $d\lambda_{\Delta_k}(w) = \phi_{\sigma}(w) d D_k^{\sigma}(w)$ and $d\lambda_{\V^k}(v) = \psi(v) \sum_{\sigma\in S_k} d G_k^{\sigma}(v)$
for some measurable functions $\phi_{\sigma}:\Delta_k \to [0,\infty)$ %]$
and $\psi:\V^k \to [0,\infty)$. %]$
Thus, 
\begin{align*}
(\lambda_{\Delta_k}\times \lambda_{\V^k})(\Theta_k\setminus\Theta_*)
&= \int_{\Theta_k\setminus\Theta_*} d \lambda_{\Delta_k}(w) \psi(v) \sum_{\sigma\in S_k} d G_k^{\sigma}(v) \\
&= \sum_{\sigma\in S_k} \int_{\Theta_k\setminus\Theta_*} \phi_{\sigma}(w)\psi(v) d D_k^{\sigma}(w) d G_k^{\sigma}(v) = 0
\end{align*}
since $(D_k^{\sigma} \times G_k^{\sigma})(\Theta_k\setminus\Theta_*) = 0$ for all $\sigma\in S_k$ by \cref{equation:product-measure}.
Therefore,
$$\lambda(\Theta\setminus\Theta_*) = \sum_{k=1}^\infty (\lambda_{\Delta_k}\times \lambda_{\V^k})(\Theta_k\setminus\Theta_*) = 0.$$
\fi

\section*{Acknowledgments}
Thanks to Matt Harrison for helpful comments on an early version of this manuscript.

\appendix

\section{Supporting results}
\label{section:details}

\begin{proposition}
\label{proposition:union}
If $\X_1,\X_2,\ldots$ is a sequence of disjoint, complete separable metric spaces with metrics $d_1,d_2,\ldots$ respectively, then $\X =\bigcup_{i = 1}^\infty \X_i$ is a complete separable metric space under the metric
$$ d(x,y) =\branch{\min\{d_i(x,y), 1\}}{x,y\in \X_i \text{ for some } i,}
                  {1}{x\in \X_i,\,y\in \X_j, \text{ and } i\neq j,} $$
and the topology induced by this metric coincides with the disjoint union topology.
\end{proposition}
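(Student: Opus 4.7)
The plan is to verify, in order, that $d$ is a metric, that the induced topology coincides with the disjoint union topology, that $\X$ is separable, and that $\X$ is complete. Each of these reduces to a short case analysis in which the only interesting observation is that two points at distance strictly less than $1$ must lie in a common $\X_i$.

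For the metric axioms, nonnegativity, definiteness, and symmetry are immediate from the definition. The only nontrivial point is the triangle inequality $d(x,z) \leq d(x,y) + d(y,z)$. I would split on which of the $\X_i$ contain $x$, $y$, $z$. If all three lie in the same $\X_i$, the inequality reduces to the triangle inequality for the truncated metric $\min\{d_i(\cdot,\cdot),1\}$, which is well known to be a metric whenever $d_i$ is. In every other case the right-hand side contains at least one summand equal to $1$, while the left-hand side is at most $1$, so the inequality is automatic.

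For the topology, observe that for each $i$ and each $x \in \X_i$, the open $d$-ball of radius $\varepsilon < 1$ about $x$ equals the open $d_i$-ball of the same radius. Hence each $\X_i$ is $d$-open in $\X$ (take $\varepsilon = 1/2$, say), and the subspace topology on $\X_i$ inherited from $(\X,d)$ agrees with the topology induced by $d_i$. It follows that $U \subseteq \X$ is $d$-open if and only if $U \cap \X_i$ is open in $\X_i$ for every $i$, which is exactly the disjoint union topology. Separability is then immediate: choose a countable dense $S_i \subseteq \X_i$ for each $i$ and take $\bigcup_{i=1}^\infty S_i$, which is countable and dense in $\X$ by the description of $d$-balls within each $\X_i$.

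Finally, for completeness, let $(x_n)$ be $d$-Cauchy in $\X$ and pick $N$ with $d(x_m,x_n) < 1$ for all $m,n \geq N$. Because any two points from distinct $\X_i$'s are at $d$-distance exactly $1$, the entire tail $(x_n)_{n\geq N}$ lies in a single $\X_i$, on which $d$ coincides with $\min\{d_i,1\}$; Cauchyness of this tail under $\min\{d_i,1\}$ is equivalent to Cauchyness under $d_i$, and completeness of $(\X_i,d_i)$ yields a limit in $\X_i$, hence in $\X$. The only step that calls for any real care is the triangle inequality case split, and even that is routine; the remainder is straightforward bookkeeping built on the single observation that strict $d$-distance less than $1$ confines points to a common piece $\X_i$.
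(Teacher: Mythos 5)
Your proof is correct and follows essentially the same route as the paper's: the same case split for the triangle inequality (reducing the all-in-one-piece case to the standard truncated metric and noting that a summand equal to $1$ handles the rest), the same tail-in-a-single-piece argument for completeness, the same union of countable dense sets for separability, and an equivalent identification of the $d$-topology with the disjoint union topology via small balls. No gaps.
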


The disjoint union topology is the smallest topology that contains all the open sets of all the $\X_i$'s. Equivalently, it is the topology consisting of all unions of the form $\bigcup_{i = 1}^\infty A_i$ where $A_i$ is open in $\X_i$ for $i\in\{1,2,\ldots\}$.

\iffalse
\begin{proposition}
\label{proposition:metric}
If $d(x,y)$ is a metric then $\bar d(x,y) = \min\{d(x,y), 1\}$ is a metric.
\end{proposition}
\begin{proof}
% Let $a\wedge b$ denote $\min\{a,b\}$.
The triangle inequality follows from the fact that if $a,b\geq 0$ then 
$$\min\{a+b, 1\}\leq \min\{a, 1\} + \min\{b, 1\},$$
which can be verified by checking three cases:
(1) $a\geq 1$ or $b\geq 1$, 
(2) $a<1$, $b<1$, $a+b\leq 1$,
(3) $a<1$, $b<1$, $a+b>1$.
The other properties of a metric are immediate.
\end{proof}
\fi

\begin{proof}%[\bf Proof of \cref{proposition:union}]
First, we show that $d$ is a metric on $\X$. It is easy to see that $d(x,y) = d(y,x)$, $d(x,y)\geq 0$, and $d(x,y) = 0 \iff x = y$. To prove the triangle inequality, let $x,y,z\in \X$ and suppose $x\in \X_i$, $y\in \X_j$, $z\in \X_k$. Using the fact that $\bar d(x,y) := \min\{d(x,y), 1\}$ is a metric \citep[Theorem 20.1]{munkres2000topology}, it is simple to check that $d(x,y)\leq d(x,z) + d(z,y)$ in each of the following cases:
(1) $i = j = k$, 
(2) $i = j\neq k$, and
(3) $i\neq j$.

Next, we show that $\X$ is complete under $d$.  Let $x_1,x_2,\ldots\in \X$ be a Cauchy sequence. Choose $N$ such that for all $n,m\geq N$, $d(x_n,x_m)\leq 1/2$. Suppose $i$ is the index such that $x_N\in \X_i$. Then $x_n\in \X_i$ for all $n\geq N$, and $d(x_n,x_m) = d_i(x_n,x_m)$ for all $n,m\geq N$.  Thus, $(x_N,x_{N +1},\ldots)$ is a Cauchy sequence in $\X_i$ under $d_i$, so it converges (under $d_i$) to some $x\in \X_i$ since $\X_i$ is complete. Hence, it also converges to $x$ under $d$. Therefore, $\X$ is complete.

Further, $\X$ is separable, since if $C_i\subseteq \X_i$ is a countable dense subset of $\X_i$ under $d_i$ then it is also dense in $\X_i$ under $d$, so $\bigcup_{i = 1}^\infty C_i$ is a countable dense subset of $\X$ under $d$.

Finally, $d$ induces the disjoint union topology on $\X$, since the collection of open balls
$$\left\{B_\varepsilon(x):\varepsilon\in(0,1),\,x\in \X_i,\,i= 1,2,\ldots\right\} $$
where $B_\varepsilon(x) =\{y\in \X: d(x,y)<\varepsilon\}$ is a base for both the disjoint union topology and the $d$-metric topology.
\end{proof}

\begin{proposition}
\label{proposition:subsets}
Suppose $\X_1,\X_2,\ldots$ and $\X$ are defined as in \cref{proposition:union}. If $A_1,A_2,\ldots$ are Borel measurable subsets of $\X_1,\X_2,\ldots$, respectively, then $\bigcup_{i = 1}^\infty A_i$ is a Borel measurable subset of $\X$.
\end{proposition}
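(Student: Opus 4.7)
The plan is to show that each $A_i$ is itself Borel measurable in $\X$, and then to conclude by countable additivity of the Borel sigma-algebra. The argument has two ingredients: (a) each $\X_i$ is Borel in $\X$, and (b) the Borel structure that $\X_i$ inherits as a subspace of $(\X,d)$ coincides with the original Borel structure on $(\X_i,d_i)$.

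For (a), I would observe that $\X_i$ is actually open in $\X$ under the metric $d$ of \cref{proposition:union}. Indeed, for any $x\in \X_i$, the open ball $B_{1/2}(x) = \{y\in\X : d(x,y) < 1/2\}$ is contained in $\X_i$, because $d(x,y) = 1$ whenever $y$ lies in some $\X_j$ with $j\neq i$. Hence $\X_i$ is open, and so Borel, in $\X$.

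For (b), note that the restriction of $d$ to $\X_i\times\X_i$ equals $\min\{d_i(x,y),1\}$, which induces the same topology on $\X_i$ as $d_i$ itself (since $\min\{d_i,1\}$ is a standard bounded metric equivalent to $d_i$, as used in \citet[Theorem 20.1]{munkres2000topology}). Consequently, the subspace topology on $\X_i$ induced by $d$ agrees with the original topology on $\X_i$, so the Borel sigma-algebra on $(\X_i,d_i)$ coincides with the trace sigma-algebra $\{B\cap \X_i : B \in \mathcal{B}(\X)\}$. Thus, for each Borel $A_i\subseteq \X_i$, there exists $B_i\in\mathcal{B}(\X)$ with $A_i = B_i\cap \X_i$, and since both $B_i$ and $\X_i$ are Borel in $\X$, so is $A_i$.

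Finally, $\bigcup_{i=1}^\infty A_i$ is a countable union of Borel subsets of $\X$, hence Borel. No step presents a serious obstacle; the main thing to keep clean is the identification between the subspace Borel structure and the intrinsic Borel structure on $\X_i$, which is an immediate consequence of the topology-agreement already established in \cref{proposition:union}.
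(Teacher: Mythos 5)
Your proof is correct and rests on essentially the same observation as the paper's: the topology that $\X_i$ carries inside $\X$ is compatible with its intrinsic topology, so each $A_i$ is Borel in $\X$ and the countable union is Borel. The paper reaches this slightly more directly --- since every open subset of $\X_i$ is by definition open in the disjoint union topology, $\mathcal{T}_{\X_i}\subseteq\mathcal{T}_\X$ gives $\mathcal{B}_{\X_i}\subseteq\mathcal{B}_\X$ immediately, without needing your intermediate steps about the openness of $\X_i$ and the trace sigma-algebra.
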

\begin{proof}
For a topological space $Y$, let $\mathcal{T}_Y$ denote its topology and let $\mathcal{B}_Y=\sigma(\mathcal{T}_Y)$ denote its Borel sigma-algebra. Since $\mathcal{T}_{\X_i}\subseteq\mathcal{T}_\X$ (by the definition of the disjoint union topology) then $\mathcal{B}_{\X_i}\subseteq\mathcal{B}_\X$, and therefore $A_i\in\mathcal{B}_{\X_i}\subseteq\mathcal{B}_\X$ for all $i = 1,2,\ldots$. Hence, $\bigcup_{i = 1}^\infty A_i\in\mathcal{B}_\X$.
\end{proof}

% Note to self: I completed a complete and thorough check for correctness of the math.  (JWM 2022-05-05)

\bibliography{references}
\bibliographystyle{abbrvnatcaplf}

\end{document}